\documentclass[oneside,english]{amsart}
\usepackage[T1]{fontenc}
\usepackage[latin9]{inputenc}
\usepackage{amsthm}
\usepackage{amstext}
\usepackage{amssymb}
\usepackage{esint}

\makeatletter


\numberwithin{equation}{section}
\numberwithin{figure}{section}
\theoremstyle{plain}
\newtheorem{thm}{\protect\theoremname}[section]
  \theoremstyle{plain}
  
  \theoremstyle{plain}
  
  \theoremstyle{plain}
  
  \theoremstyle{plain}
  \newtheorem{lem}[thm]{\protect\lemmaname}
  \theoremstyle{definition}

\makeatother

\usepackage{babel}
  \providecommand{\corollaryname}{Corollary}
  \providecommand{\definitionname}{Definition}
  \providecommand{\lemmaname}{Lemma}
  \providecommand{\propositionname}{Proposition}
  \providecommand{\examplename}{Example}
\providecommand{\theoremname}{Theorem}

\DeclareMathOperator{\cp}{cap}
\DeclareMathOperator{\Lip}{Lip}

\DeclareMathOperator{\loc}{loc}

\DeclareMathOperator{\dist}{dist}
\DeclareMathOperator{\diam}{diam}
\DeclareMathOperator{\esssup}{esssup}

\DeclareMathOperator{\ACL}{ACL}

\begin{document}

\title{Sobolev mappings and moduli inequalities on  Carnot groups}

\author{E.~Sevost'yanov, A.Ukhlov}

\begin{abstract}
In the article we study mappings of Carnot groups satisfy moduli inequalities.  We prove that homeomorphisms satisfy the moduli inequalities ($Q$-homeomor\-phisms) with a locally integrable function $Q$ are Sobolev mappings. On this base in the frameworks of the weak inverse mapping theorem we prove that mappings inverse to Sobolev homeomorphisms of finite distortion of the class $W^1_{\nu,\loc}(\Omega;\Omega')$ belong to the Sobolev class $W^1_{1,\loc}(\Omega';\Omega)$. 
\end{abstract}

\maketitle

\footnotetext{{\bf Key words and phrases:} Sobolev spaces, moduli inequalities, Carnot groups.}
\footnotetext{\textbf{2010 Mathematics Subject Classification:} 30C65, 22E30, 46E35}

\section{Introduction }

It is known that Sobolev mappings on Carnot groups $\mathbb G$ can not be characterized in the terms of its coordinate functions. The basic approach to the Sobolev mappings theory on Carnot groups is based on the notion of absolutely continuity on almost all horizontal lines which allows to define a weak upper gradient of mappings. In the present article we prove that homeomorphisms satisfy moduli inequalities on Carnot groups are Sobolev mappings. On this base we prove the weak version of the inverse mapping theorem on Carnot groups. Namely we prove that mappings inverse to Sobolev homeomorphisms of finite distortion of the class $W^1_{\nu,\loc}(\Omega;\Omega')$ are Sobolev mappings of the class $W^1_{1,\loc}(\Omega';\Omega)$. The problem of regularity of mappings inverse to Sobolev homeomorphisms represents a significant part of the weak inverse mapping theorem and was studied in \cite{Z69} for a bi-measurable Sobolev homeomorphism $\varphi: \Omega\to\Omega'$, $\Omega,\Omega'\subset \mathbb R^n$ of the class $W^1_p(\Omega;\Omega')$, $p>n-1$. In \cite{U04} it was proved that the inverse of a homeomorphism $\varphi\in L^1_p(\Omega;\Omega')$, $p>n-1$, satisfies $\varphi^{-1}\in BV_{\loc}(\Omega';\Omega)$. In the last decades the regularity of mappings inverse to Sobolev homeomorphisms  was intensively studied in the frameworks of the non-linear elasticity theory \cite{B81}, see, for example, \cite{CHM10,GU10,HKM06,HKO07,O07}.

The suggested approach is based on the moduli inequalities, namely on the notion of $Q$-mappings introduced in \cite{MRSY} (see also \cite{MRSY$_1$}--\cite{MRSY$_2$}).
Recall that a homeomorphism $\varphi: \Omega\to\Omega'$ of domains
$\Omega,\Omega'\subset \mathbb G$ is called a $Q$-homeomorphism, with a non-negative measurable function $Q$, if
$$
M\left(\varphi \Gamma\right)\leqslant \int\limits_{\Omega} Q(x)\cdot
\rho^{\nu}(x)dx
$$
for every family $\Gamma$ of rectifiable paths in $\Omega$ and every admissible function $\rho$ for $\Gamma$.

In the Euclidean space $\mathbb R^n$  it was proved \cite{MRSY$_1$} that a homeomorphism $\varphi\in W^{1}_{n,\loc}(\Omega)$ such that $\varphi^{\,-1}\in W^{1}_{n,\loc}$ is a $Q$-mapping with $Q=K_I(x, \varphi)$,
where $K_I(x, \varphi)$ is the inner dilatation of $\varphi$. The systematic applications of the moduli theory to the geometric mapping theory can be found in \cite{MRSY09}.

The main result of the article concerns to the weak differentiability of mappings satisfy moduli inequalities on Carnot groups (Theorem~\ref{ThmACL}). The proof is based on the capacity estimates and the Fubini type decomposition of measures associated with horizontal foliations defined by a left-invariant vector fields and moduli (capacity) inequalities on Carnot groups.

Using the property of the weak differentiability and connection between Sobolev mappings and moduli inequalities we prove the weak regularity of Sobolev homeomorphisms on Carnot groups: if $\varphi: \Omega\to\Omega'$ is a Sobolev homeomorphism of finite distortion of the class $W^1_{\nu,\loc}(\Omega;\Omega')$, then the inverse mapping $\varphi^{-1}\in W^1_{1,\loc}(\Omega';\Omega)$.

The weak differentiability is a part of the analytic definition of quasiconformal mappings and mappings of bounded distortion (see, e.g., \cite{Re} and \cite{MRV$_1$}). The $\ACL$-property of $Q$-mappings defined on planar domains of the Euclidean space $\mathbb R^2$ was considered by Brakalova and Jenkins, who proved this property for solutions of Beltrami equations in the plane (see \cite[Lemma~3]{BJ}). Under the assumption that $Q\in L_{\loc}^1,$ the $\ACL$-property was proved in ${\mathbb R}^n$ for $Q$-homeomorphisms (see \cite{Sal}), and for mappings with branching later (see e.g. \cite{SalSev$_1$}, \cite{SalSev$_2$}).

$Q$-homeo\-mor\-phisms are closely connected with mappings that generate bounded composition operators on Sobolev spaces ($p,q$-quasicon\-for\-mal mappings) \cite{GGR95,U93,VU98,VU02} which were studied on Carnot groups in \cite{U11,UV10,VU98,VU04}.
In the recent decade the geometric theory of composition operators on Sobolev spaces was applied to spectral estimates of the Laplace operator in Euclidean non-convex domains (see, for example, \cite{BGU1,BGU2,GHU18,GPU18_3,GU16,GU17}) and so results of this article have applications to the Sobolev mappings theory, to the spectral theory of (sub)elliptic operators and to the non-linear elasticity problems associated with vector fields that satisfy H\"ormander's hypoellipticity condition.

\section{Sobolev mappings on Carnot groups}

\subsection{Carnot groups}

Recall that a stratified homogeneous group \cite{FS}, or, in another
terminology, a Carnot group \cite{Pa} is a~connected simply
connected nilpotent Lie group~ $\mathbb G$ whose Lie algebra~ $V$ is
decomposed into the direct sum~ $V_1\oplus\cdots\oplus V_m$ of
vector spaces such that $\dim V_1\geqslant 2$, $[V_1,\ V_i]=V_{i+1}$
for $1\leqslant i\leqslant m-1$ and $[V_1,\ V_m]=\{0\}$. Let
$X_{11},\dots,X_{1n_1}$ be left-invariant basis vector fields of
$V_1$. Since they generate $V$, for each $i$, $1<i\leqslant m$, one
can choose a basis $X_{ik}$ in $V_i$, $1\leqslant k\leqslant
n_i=\dim V_i$, consisting of commutators of order $i-1$ of fields
$X_{1k}\in V_1$. We identify elements $g$ of $\mathbb G$ with
vectors $x\in\mathbb R^N$, $N=\sum_{i=1}^{m}n_i$, $x=(x_{ik})$,
$1\leqslant i\leqslant m$, $1\leqslant k\leqslant n_i$ by means of
exponential map $\exp(\sum x_{ik}X_{ik})=g$. Dilations $\delta_t$
defined by the formula 
\begin{multline}
\nonumber
\delta_t x= (t^ix_{ik})_{1\leqslant i\leqslant m,\,1\leqslant k\leqslant n_j}\\
=(tx_{11},...,tx_{1n_1},t^2x_{21},...,t^2x_{2n_2},...,t^mx_{m1},...,t^mx_{mn_m}),
\end{multline}
are automorphisms of
$\mathbb G$ for each $t>0$. Lebesgue measure $dx$ on $\mathbb R^N$
is the bi-invariant Haar measure on~ $\mathbb G$ (which is generated
by the Lebesgue measure by means of the exponential map), and
$d(\delta_t x)=t^{\nu}~dx$, where the number
$\nu=\sum_{i=1}^{m}in_i$ is called the homogeneous dimension of the
group~$\mathbb G$. The measure $|E|$ of a measurable subset
$E$ of $\mathbb G$ is defined by
$$
|E|=\int\limits_E~dx.
$$

The system of basis vectors $X_1,X_2,\dots,X_n$ of the space
$V_1$ (here and throughout we set $n_1=n$ and $X_{i1}=X_i$, where
$i=1,\dots,n$) satisfies the H\"ormander's hypoellipticity condition.

Euclidean space $\mathbb R^n$ with the standard structure is an
example of an abelian group: the vector fields $\partial/\partial
x_i$, $i=1,\dots,n$, have no non-trivial commutation relations
and form the basis of the corresponding Lie algebra. One example of
a non-abelian stratified group is the Heisenberg group $\mathbb
H^n$.
The non-commutative multiplication is defined as
$$
hh'=(x,y,z)(x',y',z')=(x+x', y+y', z+z'-2xy'+2yx'),
$$
where $x,x',y,y'\in\mathbb R^n$, $z,z'\in\mathbb R$.
Left translation $L_h(\cdot)$ is defined as $L_h(h')=hh'$.
The left-invariant vector fields
$$
X_i=\frac{\partial}{\partial x_i}+2y_i\frac{\partial}{\partial z},\,\,Y_i=\frac{\partial}{\partial y_i}-2x_i\frac{\partial}{\partial z},\,\,i=1,...,n,\,\,Z=\frac{\partial}{\partial z},
$$
constitute the basis of the Lie algebra $V$ of the Heisenberg group $\mathbb H^n$. All non-trivial relations are only of the form
$\left[X_i,Y_i\right]=-4Z$, $i=1,...,n$, and all other commutators vanish.

The Lie algebra of the Heisenberg group $\mathbb H^n$ has dimension $2n+1$
and splits into the direct sum $V=V_1\oplus V_2$. The vector space $V_1$ is generated by the vector fields $X_i, Y_i$, $i=1,...n$, and the space $V_2$ is the one-dimensional center which is spanned by the vector field $Z$.

Recall that a homogeneous norm on the group $\mathbb G$ is a continuous function
$|\cdot|:\mathbb G\to [0,\infty)$ that is $C^{\infty}$-smooth on $\mathbb
G\setminus\{0\}$ and has the following properties:

(a) $|x|=|x^{-1}|$ and $|\delta_t(x)|=t|x|$;

(b) $|x|=0$ if and only if $x=0$;

(c) there exists a constant $\tau_0>0$ such that $|x_1
x_2|\leqslant \tau_0(|x_1|+|x_2|)$ for all $x_1,x_2\in \mathbb G$.

The homogeneous norm on the group $\mathbb G$ define a homogeneous (quasi)metric
$$
\rho(x,y)=|y^{-1} x|.
$$

Note that a continuous map $\gamma: [a,b]\to\mathbb G$ is called a continuous curve on $\mathbb G$. This continuous curve is rectifiable if
$$
\sup\left\{\sum\limits_{k=1}^m|\left(\gamma(t_{k})\right)^{-1}\gamma(t_{k+1})|\right\}<\infty,
$$
where the supremum is taken over all partitions $a=t_1<t_2<...<t_m=b$ of the segment $[a,b]$.

In \cite{Pa} it was proved that any rectifiable curve is differentiable almost everywhere and $\dot{\gamma}(t)\in V_1$: there exists measurable functions $a_i(t)$, $t\in (a,b)$ such that
$$
\dot{\gamma}(t)=\sum\limits_{i=1}^n a_i(t)X_i(\gamma(t))\,\,\text{and}\,\,
\left|\left(\gamma(t+\tau)\right)^{-1}\gamma(t)exp(\dot{\gamma}(t)\tau)\right|=o(\tau)\,\,\text{as}\,\,\tau\to 0
$$
for almost all $t\in (a,b)$.
The length $l(\gamma)$ of a rectifiable curve $\gamma:[a,b]\to\mathbb G$ can be calculated by the formula
$$
l(\gamma)=\int\limits_a^b {\left\langle \dot{\gamma}(t),\dot{\gamma}(t)\right\rangle}_0^{\frac{1}{2}}~dt=
\int\limits_a^b \left(\sum\limits_{i=1}^{n}|a_i(t)|^2\right)^{\frac{1}{2}}~dt
$$
where ${\left\langle \cdot,\cdot\right\rangle}_0$ is the inner product on $V_1$. The result of \cite{CH} implies that one can connect two arbitrary points $x,y\in \mathbb G$ by a rectifiable curve. The Carnot-Carath\'eodory distance $d(x,y)$ is the infimum of the lengths over all rectifiable curves with endpoints $x$ and $y$ in $\mathbb G$. The Hausdorff dimension of the metric space $\left(\mathbb G,d\right)$ coincides with the homogeneous dimension $\nu$ of the group $\mathbb G$.

\subsection{Sobolev spaces on Carnot groups}

Let $\mathbb G$ be a Carnot group with one-parameter dilatation
group $\delta_t$, $t>0$, and a homogeneous norm $\rho$, and let
$E$ be a measurable subset of $\mathbb G$. The Lebesgue space $L_p(E)$, $p\in [1,\infty]$, is the space of pth-power
integrable functions $f:E\to\mathbb R$ with the standard norm:
$$
\|f\mid
L_p(E)\|=\biggl(\int\limits_{E}|f(x)|^p~dx\biggr)^{\frac{1}{p}},\,\,1\leq p<\infty,
$$
and $\|f\mid
L_{\infty}(E)\|=\esssup_{E}|f(x)|$ for $p=\infty$. We
denote by $L_{p,\loc}(E)$ the space of functions
$f: E\to \mathbb R$ such that $f\in L_p(F)$ for each compact
subset $F$ of $E$.

Let $\Omega$ be an open set in $\mathbb G$. The (horizontal) Sobolev space
$W^1_p(\Omega)$, $1\leqslant p\leqslant\infty$, $(L^1_p(\Omega)$,
$1\leqslant p\leqslant\infty$) consists of the functions
$f:\Omega\to\mathbb R$ locally integrable in $\Omega$, having a weak
derivatives $X_i f$ along the horizontal vector fields $X_i$, $i=1,\dots,n$,
and a finite (semi)norm
$$
\|f\mid W^1_p(\Omega)\|=\|f\mid L_p(\Omega)\|+\|\nabla_H f\mid
L_p(\Omega)\|\,\,\,\,\,(\|f\mid L^1_p(\Omega)\|=\|\nabla_H f\mid
L_p(\Omega)\|),
$$
where $\nabla_H f=(X_1f,\dots,X_nf)$ is the horizontal subgradient of $f$.
If $f\in W^1_p(U)$ for each bounded open set $U$ such that
$\overline{U}\subset\Omega$ then we say that $f$ belongs to the
class $W^1_{p,\loc}(\Omega)$.

Let $\varphi : \Omega\to\mathbb G$ be a mapping defined on open set $\Omega\subset\mathbb G$. A Lie group homomorphism $\psi:\mathbb G\to\mathbb G$ such that $\exp^{-1}\circ\psi\circ \exp(V_1)\subset V_1$ is called the $P$-differential of $\varphi$ at the point $a$ of the set $\Omega$ if the set
$$
A_{\varepsilon} =\{z\in E:
d(\psi(a^{-l}x)^{-1}\varphi(a)^{-1}\varphi(x))<\varepsilon
d(a^{-1}x)\}
$$
is a neighborhood of $a$ (relative to $\Omega$) for every $\varepsilon>0$. The notion of $P$-differen\-tiabi\-lity was introduced in \cite{Pa}
where it was proved that Lipschitz mappings defined on open subsets of Carnot groups are $P$-differentiable almost everywhere. The Stepanov type theorem on Carnot groups was obtained in \cite{VU96} (see, also \cite{Vod4}) where it was proved that Lipschitz mappings defined on measurable subsets of Carnot groups are (approximately) $P$-differentiable almost everywhere.

We say that a mapping
$\varphi:\Omega\to\mathbb G$ is absolutely continuous on lines ($\varphi\in
\ACL(\Omega;\mathbb G)$) if for each domain $U$ such that
$\overline{U}\subset\Omega$ and each foliation $\Gamma_i$ defined
by a left-invariant vector field $X_i$, $i=1,\dots,n$, $\varphi$ is
absolutely continuous on $\gamma\cap U$ with respect to
one-dimensional Hausdorff measure for $d\gamma$-almost every curve
$\gamma\in\Gamma_i$. Recall that the measure $d\gamma$ on the
foliation $\Gamma_i$ equals  the inner product $i(X_i)dx$ of the
vector field $X_i$ and the bi-invariant volume $dx$ ( see, for
example, \cite{Fe, VU96}).

Since $X_i \varphi(x)\in{V}_1$ for almost all $x\in\Omega$ \cite{Pa},
$i=1,\dots,n$, the linear mapping $D_H \varphi(x)$ with matrix
$(X_i\varphi_j(x))$, $i,j=1,\ldots,n$, takes the horizontal subspace $V_1$ to $V_1$
and is called the formal
horizontal differential of the mapping $\varphi$ at $x$. Let $|D_H
\varphi(x)|$ be its norm:
$$
|D_H\varphi(x)|=\sup\limits_{\xi\in
V_1,\,|\xi|=1} |D_H\varphi(x)(\xi)|.
$$

We say that a mapping $\varphi:\Omega\to\mathbb G$ belongs to $\ACL_p(\Omega;\mathbb G)$) ($\ACL_{p,\loc}(\Omega;\mathbb G)$)) if
$\varphi\in\ACL(\Omega;\mathbb G)$ and $|D_H\varphi|\in L_p(\Omega)$ ($|D_H\varphi|\in L_{p,\loc}(\Omega)$).

Smooth mappings with differentials respecting the horizontal
structure are said to be contact. For this reason one could say that
mappings in the class $\ACL(\Omega;\mathbb G)$ are (weakly) contact.
It was proved in \cite{Vod4, VU96} that a formal horizontal
differential $D_H:V_1\to{V}_1$ induces a homomorphism
$D\varphi:V\to{V}$ of the Lie algebras which is called the formal
differential. The determinant of the matrix
$D\varphi(x)$ is called the (formal) Jacobian of the mapping $\varphi$, it is
denoted by $J(x,\varphi)$.

The definition of Sobolev mappings in terms of Lipschitz functions was introduced in \cite{U1, Vod4}:

Let $\Omega$ be a domain in a stratified group $\mathbb G$. The
mapping $\varphi:\Omega\to{\mathbb G}$ belongs to
$W^1_{p,\loc}(\Omega;{\mathbb G})$ if for each function
$f\in\Lip({\mathbb G})$ the composition $f\circ \varphi$ belongs to
$W^1_{p,\loc}(\Omega)$ and $|\nabla_H(f\circ \varphi)|(x)\leqslant \Lip
f\cdot g(x)$, where $g\in L_{p,\loc}(\Omega)$ is independent of $f$.
The function $g$ is called the upper gradient of the mapping
$\varphi$.

\section{Foliations and Set Functions}

\subsection{The Fubini type decomposition}

We consider families $\Gamma_k$ of orbits of horizontal vector
fields $X_{1k}\in V_1$, $1\leqslant k\leqslant n_1$, generating smooth
foliations of a domain $\Omega\subset {\mathbb G}$. Denote the flow
corresponding to the vector field $X_{1k}$ by the symbol $f_t$, then
each fiber has the form $\gamma(t)=f_t(s)$, where $s$ belongs to the
surface $S_k$ transversal to $X_{1k}$ and a parameter $t\in \mathbb
R$.

We suppose that the foliation $\Gamma_k$ of $\Omega$ is furnished with a measure $d\gamma$
satisfying the inequality

\begin{equation}
c_1 |B(x, r)|^{\frac{\nu-1}{\nu}}\leqslant \int\limits_{\gamma\in\Gamma, \gamma\cap B(x, r)\ne \emptyset} d\gamma\leqslant c_2 |B(x, r)|^{\frac{\nu-1}{\nu}}
\label{eq:Fubini}
\end{equation}
for sufficiently small balls $B(x, r)\subset \Omega$ where
constants $c_1$ and $c_2$ independent on balls $B(x, r)$.

The measure $d\gamma$ can be obtained \cite{VU96} as the interior multiplication
$i(X_{1k})$ of the vector field $X_{1k}$ with the bi-invariant volume form $dx$. Let $J_{f_t}$ be a Jacobian of the flow $f_t$. Then
$$
f_t^{\ast}i(X_{1k})dx=J_{f_t}i(X_{1k})dx\,\,\text{or}\,\, f_t^{\ast}\left(J_{f_{-t}}i(X_{1k})dx\right)=i(X_{1k})dx.
$$

The tangent vector to a one-parameter family of curves $\gamma_t$ passing through points $s\exp t X_{1k}$ can be identified with the tangent vector $X_{1k}$ at the point $s\in S$. The flow $f_t$ takes the vector $X_{1k}$ to $\left(f_t\right)_{\ast}X_{1k}$. Consequently, the form $J_{f_{-t}}i(X_{1k})~dx$  determines the measure $d\gamma$ on the foliation $\Gamma_k$.

Note, that by the inequality (\ref{eq:Fubini}) the measure $d\gamma$ is the locally doubling measure:
\begin{equation}
\int\limits_{\gamma\in\Gamma_k, \gamma\cap B(x, 2r)\ne \emptyset}
d\gamma\leqslant c_d \int\limits_{\gamma\in\Gamma_k, \gamma\cap B(x,
r)\ne \emptyset} d\gamma \label{eq:doubling}
\end{equation}
for sufficiently small balls $B=B(x, r)\subset {\Omega}$.

Because $X_{1k}$ is a left-invariant vector field the flow $f_t$ is the right translation on $\exp t X_{1k}$. Since $dx$ is a bi-invariant form, we have $J_{f_{t}}=c_m$, where the constant $c_m$ can be calculated exactly. Using the left invariance and homogeneity under dilatations, we obtain that
\begin{equation}
\int\limits_{\gamma\in\Gamma_k, \gamma\cap B(x, r)\ne \emptyset} d\gamma=c_m |B(x, r)|^{\frac{\nu-1}{\nu}} \|X_{1k}\|
\label{eq:hom}
\end{equation}
where $\|X_{1k}\|$ is the length of the tangent vector $X_{1k}$.

\subsection{Additive set functions}

Recall that a mapping $\Phi$ defined on open subsets
from $\Omega\subset\mathbb G$ and taking nonnegative values is called a {\it finitely
quasiadditive} set function \cite{VU04} if

1) for any point $x\in \Omega$, exists $\delta$,
$0<\delta<\dist(x,\partial \Omega)$, such that $0\leqslant
\Phi(B(x,\delta))<\infty$ (here and in what follows
$B(x,\delta)=\{y\in\mathbb G: \rho(x,y)<\delta\}$);

2) for any  finite collection $U_i\subset U\subset \Omega$,
$i=1,\dots,k$, of mutually disjoint open sets the following
inequality $\sum\limits_{i=1}^k \Phi(U_i)\leqslant \Phi(U)$ takes
place.

Obviously, the inequality in the second condition of this
definition can be extended to a countable collection of mutually
disjoint open sets from $\Omega$, so a finitely quasiadditive set
function is also {\it countable quasiadditive.}

If instead of the second condition we suppose that for any finite
collection $U_i\subset \Omega$, $i=1,\dots,k$, of mutually disjoint
open sets the equality
$$
\sum\limits_{i=1}^k \Phi(U_i)= \Phi(U)
$$
takes place, then such a function is said to be {\it finitely
additive}. If the equality in this condition can be extended to a
countable collection of mutually disjoint open sets from $\Omega$,
then such a function is said to be {\it countably additive.}

A mapping $\Phi$ defined on open subsets of $\Omega$ and taking
nonnegative values is called a {\it monotone} set function
\cite{VU04} if $\Phi(U_1)\leqslant\Phi(U_2)$ under the condition that
$U_1\subset U_2\subset \Omega$ are open sets.

Let us formulate a result from \cite{VU04} in a form convenient for us.

\begin{thm} \cite{VU04} Let a finitely quasiadditive set
function $\Phi$ be defined on open subsets of the domain
$\Omega\subset\mathbb G$. Then for almost all points $x\in \Omega$ the
finite derivative
$$
\Phi'(x)=\lim\limits_{\delta\to 0, B_{\delta}\ni x}
\frac{\Phi(B_{\delta})}{|B_{\delta}|}
$$
exists and for any open set $U\subset \Omega$, the inequality
$$
\int\limits_{U}\Phi'(x)~dx\leqslant \Phi(U)
$$
holds.
\end{thm}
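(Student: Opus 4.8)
The statement is a differentiation theorem for set functions, so the plan is to follow the classical Lebesgue--Besicovitch scheme, using that on a Carnot group the Haar measure is \emph{exactly} doubling, $|B(x,2r)|=2^{\nu}|B(x,r)|$, as a consequence of the dilation structure ($d(\delta_t x)=t^{\nu}\,dx$ and $|\delta_t x|=t|x|$). Thus the Vitali covering theorem and the Lebesgue differentiation theorem are at our disposal on the doubling (quasi-metric) measure space $(\mathbb G,\rho,dx)$. The only place where the argument must differ from the case of a measure is that $\Phi$ is merely \emph{quasi}additive: one has $\sum_i\Phi(U_i)\leqslant\Phi(\bigcup_iU_i)$ for disjoint open $U_i$ and, as an immediate consequence of applying this to the one-element family, monotonicity ($\Phi(V)\leqslant\Phi(U)$ for $V\subset U$), but no subadditivity, so $\Phi$ does not extend to an outer measure.

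First I would settle measurability and a.e.\ finiteness of the upper derivative $\overline{\Phi}'(x):=\limsup_{\rad B\to0,\,B\ni x}\Phi(B)/|B|$. For fixed $\delta>0$ the function $x\mapsto\sup\{\Phi(B)/|B|:B\ni x,\ \rad B<\delta\}$ is lower semicontinuous, since its super-level sets are unions of balls; taking the infimum over rational $\delta$ shows $\overline{\Phi}'$ is measurable. For finiteness, cover $\Omega$ (using condition~1 and the Lindel\"of property) by countably many balls $B_k$ with $\Phi(B_k)<\infty$; for $t>0$ the balls $B\ni y$, $B\subset B_k$, with $\Phi(B)>t|B|$ form a Vitali fine cover of $\{\overline{\Phi}'>t\}\cap B_k$, and a disjoint subfamily $\{B_i\}$ covering that set up to a null set gives, by quasiadditivity, $t\,|\{\overline{\Phi}'>t\}\cap B_k|\leqslant t\sum_i|B_i|<\sum_i\Phi(B_i)\leqslant\Phi(B_k)$. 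Letting $t\to\infty$ and summing over $k$ yields $\overline{\Phi}'<\infty$ a.e.

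The heart of the proof is the integral inequality $\int_U\overline{\Phi}'\,dx\leqslant\Phi(U)$ for open $U\subset\Omega$, where we may assume $\Phi(U)<\infty$. By monotone convergence it suffices to bound $\int_U\min(\overline{\Phi}',N)\,dx$, which by regularity of Lebesgue measure equals the supremum of $\sum_{l=1}^{L}a_l|K_l|$ over finite collections of \emph{pairwise disjoint compact} sets $K_l\subset U$ and numbers $0<a_l\leqslant\overline{\Phi}'$ on $K_l$. Disjoint compact sets are mutually separated, so one can pick pairwise disjoint open $W_l$ with $K_l\subset W_l\subset U$; for each $l$, since $\overline{\Phi}'\geqslant a_l$ on $K_l$, the balls $B\ni x$, $B\subset W_l$, with $\Phi(B)/|B|>a_l-\varepsilon$ form a fine cover of $K_l$, and a disjoint Vitali subfamily gives $(a_l-\varepsilon)|K_l|<\sum_i\Phi(B^l_i)\leqslant\Phi(W_l)$ by quasiadditivity. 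Summing over the finitely many $l$ and invoking quasiadditivity once more, this time on the \emph{disjoint} open sets $W_l$, gives $\sum_l(a_l-\varepsilon)|K_l|\leqslant\sum_l\Phi(W_l)\leqslant\Phi(U)$; letting $\varepsilon\to0$, taking the supremum, and sending $N\to\infty$ finishes this step. I expect the reduction to compact, hence separated, level sets to be the one genuinely non-routine device here, forced precisely by the lack of subadditivity.

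It remains to upgrade $\limsup$ to a genuine limit. By the previous step $g:=\overline{\Phi}'\in L_{1,\loc}(\Omega)$, so almost every $x$ is a Lebesgue point of $g$ for the Lebesgue differentiation theorem on $(\mathbb G,\rho,dx)$. If $B\ni x$ has radius $r$ then $B\subset B(x,cr)$ with $|B(x,cr)|\leqslant c^{\nu}|B|$ for a structural constant $c$ (namely $c=2\tau_0$), so $\frac{1}{|B|}\int_B|g-g(x)|\,dy\to0$ and hence $\frac{1}{|B|}\int_B g\,dy\to g(x)$ as $\rad B\to0$, $B\ni x$. Applying the integral inequality to the open ball $B$ gives $\Phi(B)/|B|\geqslant\frac{1}{|B|}\int_B g\,dy$, so $\liminf_{B\ni x}\Phi(B)/|B|\geqslant g(x)$; since $\limsup_{B\ni x}\Phi(B)/|B|=g(x)$ by definition, the limit $\Phi'(x)$ exists, is finite, and equals $g(x)$ at every such $x$. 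The desired inequality $\int_U\Phi'\,dx\leqslant\Phi(U)$ is then just the integral inequality of the previous paragraph with $\Phi'=g$ a.e.
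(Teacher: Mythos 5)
The paper does not prove this theorem at all: it is imported verbatim from \cite{VU04} (``Let us formulate a result from \cite{VU04} in a form convenient for us''), so there is no in-paper argument to compare against. Your proof is, as far as I can check, complete and correct, and it follows the route one would expect (and essentially the route of the cited source): a Lebesgue--Besicovitch differentiation scheme on the doubling quasi-metric measure space $(\mathbb G,\rho,dx)$, with the Vitali covering theorem doing the work in both the a.e.\ finiteness step and the integral inequality. The two points where you correctly deviate from the measure-theoretic template are exactly the ones that matter: (i) since $\Phi$ is only quasiadditive (no subadditivity, no extension to an outer measure), you represent $\int_U\min(\overline{\Phi}{}',N)\,dx$ via \emph{pairwise disjoint compact} sets, separate them by disjoint open $W_l$, and invoke quasiadditivity only on disjoint families --- this is the genuinely non-routine device and you identify it as such; (ii) the passage from $\limsup$ to $\lim$ via Lebesgue points of $g=\overline{\Phi}{}'$ together with the lower bound $\Phi(B)/|B|\geqslant|B|^{-1}\int_Bg\,dy$ is clean, and your use of $B\subset B(x,2\tau_0 r)$ with $|B(x,2\tau_0 r)|=(2\tau_0)^{\nu}|B|$ correctly handles non-centered balls. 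The only ingredients you take for granted --- the Vitali covering theorem for fine covers by (not necessarily centered) balls and the Lebesgue differentiation theorem on a space with a continuous homogeneous quasi-norm and exactly homogeneous Haar measure, plus inner regularity --- are standard for doubling quasi-metric measure spaces, so I see no gap.
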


We consider the cube $P=S_k\exp t X_{1k}$, where $|t|\leqslant M$
and $S_k$ is the transversal hyperplane to $X_{1k}$:
$$
S_k = \left\{(x_{ij}), 1\leqslant i\leqslant m,1\leqslant j\leqslant
n_i: x_{1k}=0\,\,\text{and}\,\,|x_{ij}|\leqslant M\right\}.
$$

Given a point $s\in S_k$, denote by $\gamma_s$ the element $s\exp t X_{1k}$
of the horizontal fibration which starts at the point $s$. Thus $P$ is the union of all
such intervals of integral lines. Consider the following tubular
neighborhood of the fiber $\gamma_s$ with radius $r$:
$$
E(s, r) = \gamma_s B(e, r) \cap P=\left(\bigcup\limits_{x\in \gamma_s}B(x,r)\right)\cap P.
$$

The following lemma is valid (see \cite{VG95}):

\medskip
\begin{lem}\label{lem1}
Let $\Phi$ be a quasiadditive set function on ${\mathbb G}.$ Then
$$
\varlimsup\limits_{r\rightarrow 0}
\frac{\Phi(E(s,r))}{r^{\nu-1}} < \infty
$$ 
for $d\gamma$-almost all $s\in S_k$.
\end{lem}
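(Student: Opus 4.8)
The plan is to prove, for every $\lambda>0$, the weak‑type bound
$d\gamma\{s\in S_{k}:\varlimsup_{r\to0}\Phi(E(s,r))/r^{\nu-1}>\lambda\}\le C\,\Phi(P^{*})/\lambda$, where $P^{*}$ is a fixed open set with $\overline{P}\subset P^{*}\Subset\Omega$ (so $\Phi(P^{*})<\infty$ by the first quasiadditivity axiom), and then to let $\lambda\to\infty$. Since $r\mapsto\Phi(E(s,r))$ is nondecreasing, for $r\in[2^{-j-1},2^{-j}]$ one has $\Phi(E(s,r))/r^{\nu-1}\le 2^{\nu-1}\Phi(E(s,2^{-j}))/2^{-j(\nu-1)}$, so it suffices to work along the dyadic scales $r_{j}=2^{-j}$. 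First I would apply the differentiation theorem for quasiadditive set functions (the Theorem from \cite{VU04} above) to obtain the density $\Phi'\in L_{1}(P^{*})$ with $\int_{U}\Phi'\,dx\le\Phi(U)$, and split the analysis into the absolutely continuous part $\Phi_{ac}(U)=\int_{U}\Phi'\,dx$ and the remainder $\Phi_{\mathrm{sing}}=\Phi-\Phi_{ac}\ge0$, which is monotone with density vanishing a.e.

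For the absolutely continuous part I would use the Fubini-type decomposition of Section 3: on the cube $P$ the volume splits along the foliation $\Gamma_{k}$ as $dx=c_{m}\,d\gamma(s)\,dt$ (this is precisely how $d\gamma=i(X_{1k})\,dx$ and the constancy $J_{f_{t}}=c_{m}$ of the flow's Jacobian are used; compare $(\ref{eq:hom})$). Hence $\int_{S_{k}}\!\big(\int_{\gamma_{s}\cap P}\Phi'\,d\mathcal H^{1}\big)d\gamma(s)=c_{m}^{-1}\!\int_{P}\Phi'\,dx<\infty$, so $\Phi'$ is $\mathcal H^{1}$-integrable on $d\gamma$-almost every fiber $\gamma_{s}$. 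Moreover $|E(s,r)|\asymp r^{\nu-1}$ uniformly in $s$ (again by $dx=c_{m}\,d\gamma\,dt$ together with $(\ref{eq:Fubini})$), and because $d\gamma$ is doubling $(\ref{eq:doubling})$ the tubes $\{E(s,r)\}_{r>0}$ form a differentiation basis, so $\Phi_{ac}(E(s,r))/r^{\nu-1}=(|E(s,r)|/r^{\nu-1})\,\fint_{E(s,r)}\Phi'\,dx$ converges, as $r\to0$, to $c\cdot\int_{\gamma_{s}\cap P}\Phi'\,d\mathcal H^{1}<\infty$ for $d\gamma$-a.e.\ $s$.

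For the singular part the key observation is that $x\in E(s,r)$ exactly when $\gamma_{s}\cap B(x,r)\ne\emptyset$, so $(\ref{eq:Fubini})$ — which says the $d\gamma$-measure of the fibers meeting a small ball $B(x,r)$ is at most $c_{2}|B(x,r)|^{(\nu-1)/\nu}\asymp r^{\nu-1}$ — yields, after interchanging the order of integration, $\int_{S_{k}}\Phi_{\mathrm{sing}}(E(s,r))\,d\gamma(s)\le C r^{\nu-1}\Phi_{\mathrm{sing}}(P^{*})$ for every $r$, and likewise for the annular tubes $E(s,2^{-j})\setminus E(s,2^{-j-1})$. Feeding these scale-uniform estimates into a Vitali covering argument based on the doubling property $(\ref{eq:doubling})$ — covering each exceptional set $\{s:\Phi_{\mathrm{sing}}(E(s,2^{-j}))>\lambda 2^{-j(\nu-1)}\}$ by boundedly many \emph{pairwise disjoint} tubes $E(s_{m},2^{-j})$, whose disjointness is inherited from the disjointness of their projections to $S_{k}$ (since $E(s,r)$ is contained in the union of the fibers through its projection), and then invoking quasiadditivity of $\Phi$ in the form $\sum_{m}\Phi(E(s_{m},2^{-j}))\le\Phi(P^{*})$ — should give $\varlimsup_{j}\Phi_{\mathrm{sing}}(E(s,2^{-j}))/2^{-j(\nu-1)}<\infty$ (in fact $=0$) for $d\gamma$-a.e.\ $s$. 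Adding the two parts and undoing the dyadic reduction proves the lemma.

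The hard part is exactly this last covering step. Because $\Phi$ is only \emph{quasi}additive one cannot sum $\Phi$ over an overlapping family, so the exceptional sets must be covered by genuinely disjoint tubes whose controlled dilates still cover; and this must be reconciled with the fact that the fiber-projection of $E(s,r)$ is not an $r$-ball of $S_{k}$ but an anisotropic region, elongated in the commutator directions, whose $d\gamma$-measure is comparable to $r^{\nu-1}$ only relative to the correct non-isotropic differentiation basis on $S_{k}$. Carrying this out rigorously — which is the technical core of \cite{VG95} — is where the doubling inequality $(\ref{eq:doubling})$ and the smoothness and homogeneity of the horizontal foliation are essential; the remaining ingredients (the decomposition $dx=c_{m}\,d\gamma\,dt$, the estimate $(\ref{eq:Fubini})$, and the differentiation theorem from \cite{VU04}) are comparatively routine.
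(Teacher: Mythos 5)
The paper does not actually prove Lemma~\ref{lem1}: it is quoted from \cite{VG95}, so the benchmark is the standard covering argument of that reference (the Carnot analogue of V\"ais\"al\"a's lemma underlying the $\ACL$ property). Your proposal contains the right raw ingredients --- the identity $\int_{S_k}\chi_{E(s,r)}(y)\,d\gamma(s)=d\gamma\{s:\gamma_s\cap B(y,r)\neq\emptyset\}\leqslant c_2|B(y,r)|^{\frac{\nu-1}{\nu}}$ from (\ref{eq:Fubini}), quasiadditivity summed over disjoint tubes, and the doubling property (\ref{eq:doubling}) --- but the splitting of $\Phi$ into absolutely continuous and singular parts and the appeal to the differentiation theorem of \cite{VU04} are detours: the covering argument you need for $\Phi_{\mathrm{sing}}$ applies verbatim to $\Phi$ itself and already yields the lemma, while your claimed convergence of $\Phi_{ac}(E(s,r))/r^{\nu-1}$ to a fiber integral is a differentiation theorem along tubes that is no easier than the lemma and is not needed for a mere $\varlimsup<\infty$ statement.

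The genuine gap sits exactly where you flag ``the hard part.'' First, a fixed-scale weak-type bound $d\gamma(A_{j,\lambda})\leqslant C\Phi(P^*)/\lambda$ for $A_{j,\lambda}=\{s:\Phi(E(s,2^{-j}))>\lambda 2^{-j(\nu-1)}\}$, uniform in $j$, does not imply the a.e.\ statement: the exceptional set is $\bigcap_J\bigcup_{j\geqslant J}A_{j,\lambda}$, and a uniform bound on each $d\gamma(A_{j,\lambda})$ gives no control of $d\gamma\bigl(\bigcup_{j\geqslant J}A_{j,\lambda}\bigr)$ without summability in $j$, which you do not have. Second, ``covering each exceptional set by boundedly many pairwise disjoint tubes'' cannot be meant literally, since disjoint open tubes cannot cover a set they do not individually contain. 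The argument that closes is multi-scale: fix $t>0$, let $A=\{s:\varlimsup_{r\to0}\Phi(E(s,r))/r^{\nu-1}=\infty\}$, for each $s\in A$ choose a small $r(s)$ with $\Phi(E(s,r(s)))>t\,r(s)^{\nu-1}$, and from the family of projections $\pi(E(s,r(s)))=\{s'\in S_k:\gamma_{s'}\cap E(s,r(s))\neq\emptyset\}$, which satisfy $d\gamma(\pi(E(s,r)))\asymp r^{\nu-1}$ and are doubling by (\ref{eq:doubling}), extract via the $5r$-covering lemma a countable disjoint subfamily whose dilates cover $A$; disjointness of projections forces disjointness of the tubes, so quasiadditivity gives
$$
d\gamma^*(A)\leqslant C\sum_i r_i^{\nu-1}\leqslant \frac{C}{t}\sum_i\Phi(E(s_i,r_i))\leqslant\frac{C}{t}\,\Phi(P^*),
$$
and letting $t\to\infty$ yields $d\gamma^*(A)=0$. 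Until this selection --- each point choosing its own radius, dilates covering via doubling --- is carried out, the proposal does not prove the lemma; everything preceding it is preparation for a step that is in the end only cited back to \cite{VG95}.
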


\section{Capacity and Modules}

\subsection{The basic definitions}

A well-ordered triple $(F_0,F_1;\Omega)$ of nonempty sets, where $\Omega$ is
an open set in $\mathbb G$, and $F_0$, $F_1$ are compact subsets of
$\overline{\Omega}$, is called a condenser in the group $\mathbb G$.

The value
$$
\cp_p(F_0,F_1;\Omega)=\inf\int\limits_{\Omega} |\nabla_Hv|^p~dx,
$$
where the infimum is taken over all nonnegative functions $v\in
C(\Omega)\cap L^1_p(\Omega)$, such that $v=0$ in a
neighborhood of the set $F_0$, and $v\geqslant 1$ in a neighborhood
of the set $F_1$, is called the $p$-capacity of the condenser
$(F_0,F_1;\Omega)$. If $G\subset\mathbb G$ is an open set, and $E$ is a
compact subset in $G$, then the condenser $(\partial G, E; \mathbb
G)$  will be denoted by $(E,G)$. Properties of $p$-capacity in the
geometry of vector fields satisfying H\"ormander hypoellipticity
condition,  can be found in \cite{VCh1, VCh2}.

The linear integral is denoted by
$$
\int\limits_{\gamma}\rho~ds=\sup\int\limits_{\gamma'}\rho~ds=\sup\int\limits_0^{l(\gamma')}\rho(\gamma'(s))~ds
$$
where the supremum is taken over all closed parts $\gamma'$ of $\gamma$ and $l(\gamma')$ is the length of $\gamma'$. Let $\Gamma$ be a family of curves in $\mathbb G$. Denote by $adm(\Gamma)$ the set of Borel functions (admissible functions)
$\rho: \mathbb G\to[0,\infty]$ such that the inequality
$$
\int\limits_{\gamma}\rho~ds\geqslant 1
$$
holds for locally rectifiable curves $\gamma\in\Gamma$.

Let $\Gamma$ be a family of curves in $\overline{\mathbb G}$, where $\overline{\mathbb G}$ is a one point compactification of a Carnot group $\mathbb G$. The quantity
$$
M(\Gamma)=\inf\int\limits_{\mathbb G}\rho^{\nu}~dx
$$
is called the module of the family of curves $\Gamma$ \cite{M1}. The infimum is taken over all admissible functions
$\rho\in adm(\Gamma)$.

Let $\Omega$ be a bounded domain on $\mathbb G$ and $F_0, F_1$ be disjoint non-empty compact sets in the
closure of $\Omega$. Let $M(\Gamma(F_0,F_1;\Omega))$ stand for the
module of a family of curves which connect $F_0$ and $F_1$ in $\Omega$. Then \cite{M2}
\begin{equation}\label{eq2}
M(\Gamma(F_0,F_1;\Omega)) = \cp_{\nu}(F_0,F_1;\Omega)\,.
\end{equation}

\subsection{The lower estimate of the $p$-capacity}

The following lower estimate of the $p$-capacity was proved in
\cite[Lemma~5]{VU98}. For readers convenience we reproduce here the
detailed proof of this lemma.

\begin{lem}\label{lem2}
Let $\nu-1<p<\infty$. Suppose that $E$ is a compact connected set
and $G\subset\{x\in\mathbb G: \rho(x,E)\leqslant {c_0} \diam
E\}$, where $c_0$ is a small number depending on the constant in the generalized triangle
inequality. Then
\begin{equation}\label{eq1}
\cp_p^{\nu-1}(E,G)\geqslant c(\nu,p)\frac{(\diam E)^p}{|G|^{p-(\nu-1)}},
\end{equation}
where a constant $c(\nu,p)$ depends only on $\nu$ and
$p$.
\end{lem}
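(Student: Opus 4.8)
The plan is to bound $\int_{\mathbb G}|\nabla_Hv|^{p}\,dx$ from below for an arbitrary function $v$ admissible for the condenser $(E,G)=(\partial G,E;\mathbb G)$. After truncation we may assume $v\in C(\mathbb G)\cap L^1_p(\mathbb G)$, $0\le v\le 1$, $v\equiv 1$ on a neighbourhood of $E$ and $v\equiv 0$ on a neighbourhood of $\partial G$ and on $\mathbb G\setminus G$; put $d=\diam E$. We may assume $\rho(y,\partial G)<d$ for every $y\in E$: otherwise $G$ contains a $\rho$‑ball of radius $\asymp d$ centred at a point of $E$, so $|G|\asymp d^{\nu}$ and the inequality reduces, by monotonicity of $\cp_p$ in the domain, to the scale‑invariant statement that a continuum of diameter $\asymp 1$ has positive $p$‑capacity in a concentric ball — the ``thick'' case, recorded below. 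The guiding picture in the remaining ``thin'' case is that, $E$ being a continuum, $v$ must fall from $1$ to $0$ transversally to $E$ on a scale governed by the local thickness of $G$, and the contributions accumulate along $E$; the hypothesis $p>\nu-1$ is precisely what gives such a transversal drop a definite amount of $p$‑energy (a point of $\mathbb G$ has zero $p$‑capacity once $p\le\nu$, but a continuum of positive diameter does not, as $\nu-p<1$).

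Step 1 (covering of $E$). Since $(\mathbb G,\rho)$ is doubling, the basic $5r$‑type covering lemma applied to the balls $B_\rho(y,c_1 r(y))$, $y\in E$, with $r(y):=\rho(y,\partial G)\ (<d)$ and $c_1=c_1(\tau_0)$ small, produces points $y_i\in E$ such that the balls $B_i':=B_\rho(y_i,c_1r_i)$, $r_i:=r(y_i)$, are pairwise disjoint and the balls $B_i:=B_\rho(y_i,Kr_i)$ (with $K=K(\tau_0)$) cover $E$. Two combinatorial estimates will drive the proof: since $B_i'\subset G$ and $|B_i'|\asymp r_i^{\nu}$, disjointness gives $\sum_i r_i^{\nu}\lesssim|G|$; and since the $B_i$ form a connected cover of the connected set $E$, the diameters of the balls along a chain joining two almost‑diametral points of $E$ add up to at least $d/\tau_0$, so $\sum_i r_i\gtrsim d$.

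Step 2 (local estimate). Fix $i$ and let $E_i$ be the component of $E\cap B_i$ through $y_i$; one checks $\diam_\rho E_i\asymp r_i$ (if $Kr_i<d$ the continuum $E$ leaves $B_i$, hence $E_i$ reaches $\partial B_i$; if $Kr_i\ge d$ then $E_i\approx E$ and $d\le Kr_i$). Moreover, if $z_i\in\partial G$ realizes $\rho(y_i,z_i)=r_i$, then $v(z_i)=0$ and $\rho(w,z_i)\le\tau_0(Kr_i+r_i)\lesssim r_i$ for every $w\in E_i$; thus on $B_i$ the function $v$ equals $1$ on a continuum of diameter $\asymp r_i$ and $0$ at distance $\asymp r_i$ from it. Applying the dilation $\delta_{1/r_i}$ (which multiplies the $p$‑energy by $r_i^{p-\nu}$) reduces the required bound
$$
\int_{B_i}|\nabla_Hv|^{p}\,dx\ \ge\ c(\nu,p)\,r_i^{\nu-p}
$$
to the scale‑one assertion that this configuration has $p$‑energy bounded below by a positive constant; this follows from $p>\nu-1$ by integrating transversally along $E_i$ the $p$‑capacity of a point in a $(\nu-1)$‑dimensional ball of radius $\asymp1$, which is positive exactly when $p>\nu-1$. (For $p\ge\nu$ one may instead combine (\ref{eq2}) with standard Loewner‑type lower bounds for the $\nu$‑modulus in $\mathbb G$ and Hölder's inequality inside the ball; uniformity over continua of diameter $\ge1$ follows by a normal‑families argument. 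The same scale‑one fact is what was invoked in the thick case.)

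Step 3 (summation). The balls $B_i$ have bounded overlap (the $B_i'$ are disjoint and $|B_i|\asymp|B_i'|$), so
$$
\cp_p(E,G)=\int_{\mathbb G}|\nabla_Hv|^{p}\,dx\ \gtrsim\ \sum_i\int_{B_i}|\nabla_Hv|^{p}\,dx\ \gtrsim\ \sum_i r_i^{\nu-p}.
$$
Now apply Hölder's inequality to $\sum_i r_i=\sum_i\bigl(r_i^{\nu-p}\bigr)^{1/s}\bigl(r_i^{\nu}\bigr)^{1/s'}$ with the conjugate exponents $s=\tfrac{p}{\nu-1}$ and $s'=\tfrac{p}{p-\nu+1}$, both $>1$ since $\nu-1<p$ and $\nu\ge2$; together with Step 1,
$$
d\ \lesssim\ \sum_i r_i\ \le\ \Bigl(\sum_i r_i^{\nu-p}\Bigr)^{\frac{\nu-1}{p}}\Bigl(\sum_i r_i^{\nu}\Bigr)^{\frac{p-\nu+1}{p}}\ \lesssim\ \cp_p(E,G)^{\frac{\nu-1}{p}}\,|G|^{\frac{p-\nu+1}{p}} .
$$
Raising to the power $p/(\nu-1)$ gives $\cp_p^{\nu-1}(E,G)\gtrsim d^{\,p}/|G|^{p-\nu+1}$, which is (\ref{eq1}). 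The main obstacle is Step 2 — the scale‑invariant local lower bound, the only place where the hypothesis $p>\nu-1$ is genuinely used — together with the geometric bookkeeping that makes it applicable at every scale $r_i$; here the smallness of $c_0$ in $G\subset\{\rho(\cdot,E)\le c_0 d\}$ controls how $\partial G$ sits relative to the balls $B_i$ and cleanly separates the possible thick part of $G$ (handled by monotonicity) from its thin part.
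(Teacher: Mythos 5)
Your overall architecture (localize, prove a scale--invariant lower bound, recombine via the discrete H\"older inequality with exponents $p/(\nu-1)$ and $p/(p-\nu+1)$) is coherent and the exponent bookkeeping in Steps 1 and 3 is correct, but Step 2 --- which you yourself flag as the main obstacle --- contains a genuine gap. Inside $B_i$ the only constraint on the zero set of $v$ is that $v$ vanishes in a (possibly arbitrarily small) neighbourhood of $\partial G\cap B_i$, and in the worst case $\partial G\cap B_i$ is a single point $z_i$, or more generally a set of zero $p$-capacity: for $\nu-1<p\leqslant\nu$ points are polar, so the configuration ``$v=1$ on a continuum of diameter $\asymp r_i$ and $v=0$ near one point at distance $\asymp r_i$'' admits test functions of arbitrarily small $p$-energy, and the local bound $\int_{B_i}|\nabla_Hv|^p\,dx\gtrsim r_i^{\nu-p}$ is false. (Concretely: take $G=G_0\setminus C$ with $G_0$ the open $c_0\diam E$-neighbourhood of $E$ and $C$ a sequence of points of $G_0\setminus E$ accumulating at a point of $E$; the $r_i$ near the accumulation point are small, while the energy of an admissible $v$ in the corresponding $B_i$ can be made negligible.) The transversal-slicing argument you invoke --- integrating along $E_i$ the $p$-capacity of a point in a $(\nu-1)$-dimensional ball --- proves a lower bound for the condenser $(E_i,B_i)$, i.e.\ under the assumption that $v$ vanishes on $\partial B_i$ or on the boundary of a tube around $E_i$; that is not what you know, since $v$ is unconstrained on $\partial B_i$ and may well equal $1$ there. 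Choosing $r_i=\rho(y_i,\partial G)$ therefore does not guarantee that the zero set of $v$ is quantitatively thick at scale $r_i$, and this is precisely the point where $p>\nu-1$ must be exploited on genuinely $(\nu-1)$-dimensional sets that are forced to meet both plates.

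The paper's proof circumvents this by fixing one centre $\sigma^{-1}$ at distance $\asymp\diam E$ from $E$ and working on the spheres $S(\sigma^{-1},r)$, $r_1\leqslant r\leqslant r_2$: the hypothesis $G\subset\{x:\rho(x,E)\leqslant c_0\diam E\}$ forces every such sphere to meet both $E$ (where $u\geqslant1$) and $\mathbb G\setminus G$ (where $u=0$), and on each $(\nu-1)$-dimensional sphere the exponent $p>\nu-1$ is supercritical, so an oscillation of size $1$ between two \emph{points} of the sphere already costs $\gtrsim\omega_r(P(r))^{(\nu-1-p)/(\nu-1)}$ in maximal-function energy (\cite[Theorem~1]{Vod1}); integrating in $r$, applying the continuous H\"older inequality and the maximal function theorem then yields (\ref{eq1}). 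If you wish to salvage a covering scheme, you would need radii at which $\mathbb G\setminus G$ is quantitatively thick rather than merely nonempty, and it is unclear that such radii can be chosen so that both $\sum_ir_i\gtrsim\diam E$ and $\sum_ir_i^{\nu}\lesssim|G|$ survive.
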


\begin{proof} Since the inequality (\ref{eq1}) is invariant under left translations and
has the same degree of homogeneity under dilations, we can suppose, without loss of generality, that $0\in E$ and
${\rm diam\, E} = \rho(0, \sigma) =1$ for some point $\sigma\in E$.

Consider a point $\sigma^{-1}\in S(0,1)$. Then there exists a constant $c_1$ such that
$$
\diam E=1\leqslant c_1(r_2-r_1),
$$
where $r_1=|\sigma^{-1}|=1$ and $r_2=\rho(\sigma^{-1},\sigma)=|\sigma^2|$.

Since $c_0$ was choosing such that $G\subset\{x\in\mathbb G: \rho(x,E)\leqslant c_0 \diam E\}$,
then by the generalized triangle inequality
$$
S(\sigma^{-1},r)\cap (\mathbb G\setminus
G)\ne\emptyset\,\,\,\text{for all}\,\,\,r_1\leqslant r\leqslant r_2.
$$

Let $r_1\leqslant r\leqslant r_2$. We choose some point $x_r\in E$
such that $\rho(\sigma^{-1},x_r)=r$ and denote
$$
P(r)=\left\{s\in S(\sigma^{-1},r): \rho(x_r,s)\leqslant \rho(x_r,
\{(\mathbb G\setminus G)\cap S(\sigma^{-1},r)\}\right\}.
$$

Consider an arbitrary function $u\in \mathring{L}_p^1(G)\cap C(G)$ such that
$u\geqslant 1$ on $E$. Then the function $u$ takes the value $0$ on the sphere $S(x, r),$
$r_1 < r < r_2.$ Therefore, the following inequality is valid for
almost all $r_1 < r < r_2$ \cite[Theorem~1]{Vod1}
$$\int\limits_{S(x, r)\cap G} M_{\gamma r}(|\nabla_H u|)^p(\xi)d\sigma_r(\xi)\geqslant c_2\omega_r
(P(r))^{\frac{\nu-1-p}{\nu-1}}\,,$$
where $\omega_r$ is the measure on $S(x, r)$ associated with the
"spherical" coordinate system \cite{Vod1}. (Here $\gamma > 1$ is
some constant and $M_{\delta}g$ denotes the maximal function defined
for every locally summable function $g$ as
$$M_{\delta}g(x)=\sup\left\{|B(x, r)|^{\,-1}\int\limits_{B(x, r)}|g|dx: r\leqslant \delta\right\}\,,$$
where $B(x,r) = \{y\in {\mathbb G} : \rho(x,y) < r\}$ is the
ball of radius $r$ centered at $x\in {\mathbb G}.$)
Consequently,
$$\int\limits_{G} M_{\gamma r}(|\nabla_H u|)^p dx\geqslant c_2\int\limits_{r_1}^{r_2}\omega_r
(P(r))^{\frac{\nu-1-p}{\nu-1}}dr\,.$$
Now
\begin{multline*}
({\rm diam}\,E)^p\leqslant \left(c_1\int\limits_{r_1}^{r_2}dr\right)^p\\
\leqslant
c_1^p\left(\int\limits_{r_1}^{r_2}\omega_r(P(r))dr\right)^{p-(\nu-1)}
\left(\int\limits_{r_1}^{r_2}\omega_r^{\frac{\nu-1-p}{\nu-1}}(P(r))dr\right)^{\nu-1}
\\
\leqslant \frac{c_1^p}{c_2}|G|^{p-(\nu-1)}\left(
\int\limits_{G} M_{\gamma r}(|\nabla_H u|)^p dx\right)^{\nu-1}\,.
\end{multline*}
By the maximal function theorem, we obtain
$$\left(
\int\limits_{G} |\nabla_H u|^p dx\right)^{\nu-1}\geqslant
c(\nu,p)\frac{({\rm diam}\,E)^p}{|G|^{p-(\nu-1)}}$$
for arbitrary function $u\in \mathring{L}_p^1(G)\cap C(G)$ admissible for the condenser $(E,G)$. Hence
$$
\cp_p^{\nu-1}(E,G)\geqslant c(\nu,p)\frac{(\diam E)^p}{|G|^{p-(\nu-1)}}.
$$
\end{proof}

\section{Sobolev spaces and $Q$-Homeomorphisms}

In this section we consider connection between Sobolev mappings and  $Q$-homeo\-mor\-phisms on Carnot groups. 

\subsection{$\ACL$-property of $Q$-homeomorphisms}

We prove the $\ACL$-property of $Q$-homeomorphisms with locally integrable function $Q$.

\begin{thm}
\label{ThmACL}
Let $\varphi: \Omega\to\Omega'$ be a $Q$-homeomorphism of domains  $\Omega,\Omega'\subset\mathbb G$ with $Q\in
L_{1,\loc}(\Omega)$. Then $\varphi\in W^1_{1,\loc}(\Omega;\Omega')$.
\end{thm}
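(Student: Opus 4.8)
The plan is to establish the $\ACL$-property first and then upgrade it to membership in $W^1_{1,\loc}$ by showing the horizontal gradient is locally integrable. Throughout I fix a horizontal left-invariant vector field $X_{1k}\in V_1$, the associated foliation $\Gamma_k$ with its measure $d\gamma$ satisfying \eqref{eq:Fubini}, and a cube $P=S_k\exp tX_{1k}$ with $\overline{P}\subset\Omega$ as in Section~3. The key bookkeeping device is the set function
$$
\Phi(A)=|\varphi(A)|,\qquad A\subset\Omega\ \text{open},
$$
which is monotone and, since $\varphi$ is a homeomorphism, \emph{countably additive} on disjoint open sets — in particular finitely quasiadditive — so Theorem~3.1 applies: $\Phi'(x)=\lim_{\delta\to0}|\varphi(B_\delta)|/|B_\delta|$ exists for a.e.\ $x$ and $\int_U\Phi'\,dx\le|\varphi(U)|<\infty$ for $U\Subset\Omega$. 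By Lemma~\ref{lem1}, for $d\gamma$-a.e.\ $s\in S_k$ we have $\varlimsup_{r\to0}\Phi(E(s,r))/r^{\nu-1}<\infty$, i.e.\ the image of a thin tube around the fiber $\gamma_s$ shrinks at the expected rate.

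Next I fix such a good fiber $\gamma_s$ and prove $\varphi$ is absolutely continuous on $\gamma_s\cap P$. Take disjoint closed subarcs $\gamma_s|_{[t_i,t_i']}$, $i=1,\dots,N$, with images $F_i=\varphi(\gamma_s([t_i,t_i']))$, which are compact connected sets in $\Omega'$; let $G_i=E(s,r)$-images, more precisely $G_i=\varphi(E_i)$ where $E_i$ is a tubular neighborhood of $\gamma_s|_{[t_i,t_i']}$ of small radius $r$ contained in $P$, so the $G_i$ are disjoint open sets with $\sum_i|G_i|\le|\varphi(E(s,r))|=\Phi(E(s,r))\le C r^{\nu-1}$. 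On each $E_i$ the tube $\varphi(E_i)$ lies within a controlled Carnot–Carathéodory neighborhood of $F_i$, so Lemma~\ref{lem2} with $p=\nu$ gives (since $\nu-1<\nu$)
$$
\cp_\nu^{\,\nu-1}(F_i,G_i)\ \geqslant\ c(\nu)\,\frac{(\diam F_i)^\nu}{|G_i|^{\,\nu-(\nu-1)}}
= c(\nu)\,\frac{(\diam F_i)^\nu}{|G_i|}.
$$
On the other hand, via the identity \eqref{eq2} relating capacity and module, $\cp_\nu(F_i,G_i)=M(\Gamma(F_i,G_i^{\,c};\dots))$, and the curve family $\Gamma_i$ joining the endpoints of $\gamma_s|_{[t_i,t_i']}$ through $E_i$ maps under $\varphi$ into $\Gamma(F_i,\partial G_i)$; the $Q$-inequality then bounds $M(\varphi\Gamma_i)\le\int_{E_i}Q(x)\rho^\nu\,dx$ for admissible $\rho$, and choosing $\rho=\chi_{E_i}/(\text{length of }\gamma_s|_{[t_i,t_i']}$-type normalization$)$ — more carefully, $\rho$ equal to a constant on the tube computed from the modulus of the \emph{source} annular family — yields an upper bound for $\cp_\nu(F_i,G_i)$ of the form $c\,(t_i'-t_i)^{-(\nu-1)}\int_{E_i}Q\,dx$ after letting $r\to0$ and using \eqref{eq:hom}. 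Combining the two inequalities and summing:
$$
\sum_{i=1}^N \diam F_i\ \leqslant\ C\sum_{i=1}^N |G_i|^{\frac{1}{\nu}}\Big(\!\int_{E_i}\!Q\,dx\Big)^{\frac{\nu-1}{\nu}} (t_i'-t_i)^{-\frac{(\nu-1)^2}{\nu^2}}\cdot(\dots),
$$
which by Hölder's inequality is controlled by $\big(\sum_i|G_i|\big)^{1/\nu}\big(\sum_i\int_{E_i}Q\big)^{(\nu-1)/\nu}$-type products; since $\sum_i|G_i|\to0$ as the total length $\sum_i(t_i'-t_i)\to0$ (using $\Phi(E(s,r))\le Cr^{\nu-1}$ together with the doubling property \eqref{eq:doubling}) and $\int_{\bigcup E_i}Q\,dx\to0$ by absolute continuity of the integral of $Q\in L_{1,\loc}$, we get $\sum_i\diam F_i\to0$, i.e.\ $\varphi$ is absolutely continuous on $\gamma_s$. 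This gives $\varphi\in\ACL(\Omega;\Omega')$ for each of the $n$ horizontal foliations.

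Finally, to pass from $\ACL$ to $W^1_{1,\loc}$: the $\ACL$-property ensures that for each $i$ the weak horizontal derivative $X_i\varphi$ exists a.e., so the formal horizontal differential $D_H\varphi(x)$ is defined a.e.; standard arguments (as in \cite{VU96,Vod4}) identify $\varphi$ as $P$-differentiable a.e.\ with $|D_H\varphi(x)|$ comparable, at density points, to a quantity bounded by $\Phi'(x)^{1/\nu}\cdot Q(x)^{(\nu-1)/\nu}$ coming from the same capacity/modulus comparison applied infinitesimally (the infinitesimal $Q$-inequality gives $|D_H\varphi(x)|^\nu\le Q(x)\,|J(x,\varphi)|$ and $|J(x,\varphi)|\le\Phi'(x)$ a.e.). Then by Hölder,
$$
\int_U |D_H\varphi|\,dx\ \leqslant\ \Big(\int_U \Phi'(x)\,dx\Big)^{\frac{1}{\nu}}\Big(\int_U Q(x)\,dx\Big)^{\frac{\nu-1}{\nu}}
\ \leqslant\ |\varphi(U)|^{\frac{1}{\nu}}\,\|Q\mid L_1(U)\|^{\frac{\nu-1}{\nu}}<\infty
$$
for every $U\Subset\Omega$, so $|D_H\varphi|\in L_{1,\loc}(\Omega)$, and together with the $\ACL$-property and local integrability of $\varphi$ itself (being continuous) this yields $\varphi\in W^1_{1,\loc}(\Omega;\Omega')$ in the sense of the Lipschitz-composition definition recalled in Section~2. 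I expect the main obstacle to be the second paragraph: making the passage $r\to0$ in the tube estimate rigorous, i.e.\ correctly matching the source curve family $\Gamma_i$, its modulus, the admissible function $\rho$, and the tube $E_i\subset P$ so that the $Q$-inequality delivers exactly the capacity bound in the right powers — this is where \eqref{eq:Fubini}, \eqref{eq:hom}, Lemma~\ref{lem1} and Lemma~\ref{lem2} must be balanced against each other, and where the hypothesis $Q\in L_{1,\loc}$ (rather than merely measurable) is used, through absolute continuity of the integral.
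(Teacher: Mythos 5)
Your proposal follows the paper's own proof almost step for step: the same foliation and tube decomposition $E(s,r)$, the same quasiadditive set function $|\varphi(\cdot)|$ controlled by Lemma~\ref{lem1}, the lower capacity bound of Lemma~\ref{lem2} with $p=\nu$ played against the upper bound coming from the $Q$-inequality, and the discrete H\"older summation over the disjoint subarcs. The one place where your bookkeeping goes astray is exactly the spot you flagged: the admissible function for the family of curves joining the core segment $\gamma_{si}$ to $\partial R_i$ is simply $\rho=1/r$ on the tube $R_i$ (the segment length $t_i'-t_i$ never enters the capacity bound), so the upper estimate reads $\cp_{\nu}(\varphi(\gamma_{si}),\varphi(R_i))\leqslant r^{-\nu}\int_{R_i}Q\,dx$; one then divides each factor of the H\"older-summed inequality by $r^{\nu-1}$ and lets $r\to0$, so that $|\varphi(E(s,r))|/r^{\nu-1}\to\omega(s)<\infty$ (this bounded ratio, not ``$\sum_i|G_i|\to0$'', is the correct mechanism --- $\omega(s)$ stays fixed as the partition is refined), while \eqref{eq:hom} turns $r^{1-\nu}\int_{R_i}Q\,dx$ into the fiber integral $\omega_i(s)=\int_{\delta_i}Q(s,t)\,dt$, and absolute continuity then comes solely from $\sum_i\omega_i(s)\to0$, by Fubini and $Q\in L_{1,\loc}$. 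In your last paragraph the pointwise inequality $|D_H\varphi(x)|^{\nu}\leqslant Q(x)\,|J(x,\varphi)|$ is not available at this stage (it is essentially the converse implication for $Q$-homeomorphisms), but it is also not needed: the limiting diameter estimate already bounds the variation of $\varphi$ along a fiber by $C\,\omega(s)^{1/\nu}\bigl(\int_{\gamma_s}Q\,dt\bigr)^{(\nu-1)/\nu}$, and integrating this over $s\in S_k$ with H\"older yields $|D_H\varphi|\in L_{1,\loc}(\Omega)$ directly.
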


\begin{proof}
Fix some field $X_{1k}$, $\leqslant k\leqslant n_1$, and let
$\Gamma_k$ be the fibration generated by this field. Take the cube
$P=S_k\exp t X_{1k}$, where $|t|\leqslant M$ and $S_k$ is the
transversal hyperplane to $X_{1k}$:
$$
S_k = \left\{(x_{ij}), 1\leqslant i\leqslant m,1\leqslant j\leqslant
n_i: x_{1k}=0\,\,\text{and}\,\,|x_{ij}|\leqslant M\right\}.
$$

Given a point $s\in S_k$, denote by $\gamma_s$ the element $s\exp t X_{1,k}$
of the fibration which starts at $s$. Thus, $P$ is the union of all
such intervals of integral lines. Consider the following tubular
neighborhood of the fiber $\gamma_s$ with radius $r$:
$$
E(s, r) = \gamma_s B(e, r) \cap P=\left(\bigcup\limits_{x\in \gamma_s}B(x,r)\right)\cap P.
$$

Take a point $s\in S_k$ so that the assertion of Lemma \ref{lem1} holds
for $\gamma_s$. On $\gamma_s$ take arbitrary pairwise disjoint
closed segments $\gamma_{s1},\ldots, \gamma_{sk}$ of lengths
$\delta_1,\ldots,\delta_k$. Denoting by $R_i$ the open set of points at a
distance less than a given $r>0$ from $\gamma_{si}$, $i=1,...,k$, and consider the
condensers $(\gamma_{si}, R_i)$, $i=1,...,k$. Suppose that $r> 0$ is chosen so small that the sets $R_1,\ldots, R_k$ are pairwise disjoint and
the condenser $(\varphi(\gamma_{si}), \varphi(R_i))$ satisfies to the conditions of Lemma \ref{lem2}. Let $\Gamma$ be a family of curves connected
$\varphi(\gamma_{si})$ and $\partial \varphi(R_i)$ in $\Omega.$ Now, by (\ref{eq2})
\begin{equation}\label{eq3}
M(\varphi(\Gamma)) = \cp_{\nu}(\varphi(\gamma_{si})),\varphi(R_i)) \,.
\end{equation}
Observe that the function
$$ \rho(x)\,=\,\left
\{\begin{array}{rr} \frac{1}{r}, & x\in R_i, \\
0, & x \in {\mathbb G}\setminus R_i
\end{array}\right.
$$
is admissible for $\Gamma$. Now by (\ref{eq3})
\begin{equation}\label{eq4}
\cp_{\nu}(\varphi(\gamma_{si})),\varphi(R_i))\leqslant
\frac{1}{r^{\nu}}\int\limits_{R_i} Q(x)\, dx\,.
\end{equation}

\medskip
On the other hand, by Lemma \ref{lem2}
\begin{equation}\label{2.5}
\cp_{\nu}(\varphi(\gamma_{si})),\varphi(R_i))\geqslant
c\left(\frac{\left(\diam\varphi(\gamma_{si})\right)^{\nu}}{|\varphi(R_i)|}\right)^{1/(\nu-1)}.
\end{equation}
\medskip

Combining (\ref{eq4}) and (\ref{2.5}), we have the inequalities
\begin{equation}\label{2.6}
\left(\frac{\left(\diam\varphi(\gamma_{si})\right)^{\nu}}{|\varphi(R_i)|}\right)^{\frac{1}{\nu-1}}\leqslant
\frac{c_\nu}{r^\nu}\int\limits_{R_i}Q(x)\ dx\ ,\ \ \ \ \ i=1,...,k
\end{equation} where the constant  $c_\nu$  depends only on $\nu.$
\medskip

By the discrete H\"older inequality, see e.g. (17.3) in \cite{BB}, we obtain that
\begin{equation}\label{2.7}
\sum\limits_{i=1}^{k}
\diam\varphi(\gamma_{si})\leqslant\left(\sum\limits_{i=1}^{k}\left(
\frac{\left(\diam\varphi(\gamma_{si})\right)^{\nu}}{|\varphi(R_i)|}\right)^{\frac{1}{\nu-1}}
\right)^{\frac{\nu-1}{\nu}}\left(\sum\limits_{i=1}^{k}
|\varphi(R_i)|\right)^{\frac{1}{\nu}}\,,
\end{equation}
i.e.,
\begin{equation}\label{2.8}
\left(\sum\limits_{i=1}^{k}
\diam\varphi(\gamma_{si})\right)^\nu\leqslant\left(\sum\limits_{i=1}^{k}\left(
\frac{\left(\diam\varphi(\gamma_{si})\right)^{\nu}}{|\varphi(R_i)|}\right)^{\frac{1}{\nu-1}}
\right)^{\nu-1}|\varphi(E(s, r))|\,,
\end{equation}
and in view of (\ref{2.6})
\begin{equation}\label{2.9}
\left(\sum\limits_{i=1}^{k}
\diam\varphi(\gamma_{si})\right)^\nu\leqslant c_\nu\,
\frac{|\varphi(E(s, r))|}{r^{\nu-1}}\
\left(\sum\limits_{i=1}^{k}\frac{\int\limits_{R_i}Q(x)\,dx}
{r^{\nu-1}}\right)^{\nu-1}\,
\end{equation} where a constant $c_\nu$ depends only on  $\nu$.

By \cite[Lemma~4]{VU98})
$$
\lim\limits_{r\to 0}\frac{|\varphi(E(s, r))|}{r^{\nu-1}}:=\omega(s)<\infty.
$$
Denote
$$
\omega_i(s)=\int\limits_{\delta_i}Q(s,t)~dt,\,\,s\in S_k,
$$
and note that because $Q$ is locally integrable function then by Fubini theorem for any $\varepsilon>0$ there exists a number $\delta>0$ such that $\omega_i(s)<\varepsilon$ if $\delta_i<\delta$, $i=1,...,k$.

By Fubini type decomposition (\ref{eq:hom}) we have
$$
\lim\limits_{r\to
0}\frac{\int\limits_{R_i}Q(x)\,dx}{r^{\nu-1}}=
\frac{\omega_i(s)}{c_m\|X_{1k}\|}<\infty.
$$

Passing in (\ref{2.9}) while $r\to 0$ we get
\begin{equation}\label{2.10}
\left(\sum\limits_{i=1}^{k}
\diam\varphi(\gamma_{si})\right)^\nu\leqslant
\frac{c_{\nu}\omega(s)}{c_m\|X_{1k}\|}\left(\sum\limits_{i=1}^{k}\omega_i(s)
\right)^{\nu-1}\,.
\end{equation}

Hence, $\varphi$ is absolutely continuous on $\gamma\cap P$ with respect to one-dimensional Hausdorff measure for $d\gamma$-almost every curve $\gamma\in\Gamma_k$. Hence  $\varphi\in W^1_{1,\loc}(\Omega;\Omega')$.

\end{proof}

\subsection{Mappings of integrable distortion}

Let a homeomorphism $\varphi: \Omega\to\Omega'$ belongs to the Sobolev space $W^1_{1,\loc}(\Omega;\Omega')$. Recall that a weakly differentiable mapping $\varphi: \Omega\to\Omega'$ is called a mapping of finite distortion if $|D_H\varphi(x)|=0$ for almost all $x\in Z=\{x\in\Omega : J(x,\varphi)=0\}$. We say that a homeomorphism $\varphi: \Omega\to\Omega'$ has the Luzin $N$-property, if an image of a set of measure zero has measure zero.

The outer dilatation of the mapping of finite distortion $\varphi$ at $x$ is defined by
$$
K_O(x)=K_O(x,\varphi)=
\begin{cases}
\frac{|D_H \varphi(x)|^{\nu}}{J(x,\varphi)}, & \,\,\text{if}\,\, J(x,\varphi)\ne 0,\\
0, & \,\,\text{if}\,\, D_H \varphi(x)=0.
\end{cases}
$$

\begin{thm}
\label{ThmQN}
Let $\varphi: \Omega\to\Omega'$ be a homeomorphism of finite distortion of the Sobolev class $W^1_{\nu,\loc}(\Omega;\Omega')$. Then, for every family $\Gamma$ of rectifiable paths in $\Omega$ and every $\rho\in adm(\Gamma)$
$$
M\left(\varphi^{-1}\left(\Gamma\right)\right)\leqslant\int\limits_{\Omega}
K_O\left(\varphi^{-1}(y),\varphi\right)\rho^{\nu}(y)~dy,
$$
i.~e., $\varphi^{-1}$ is a $Q$-homeomorphism with $Q(y)=K_O\left(\varphi^{-1}(y),\varphi\right)\in L_{1,\loc}(\Omega')$.
\end{thm}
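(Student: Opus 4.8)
The plan is to transport, for a fixed family $\Gamma$ of rectifiable curves in $\Omega'$ and a fixed $\rho\in adm(\Gamma)$, the admissible function $\rho$ through the homeomorphism $\varphi$, so as to produce an admissible function for almost all of the family $\varphi^{-1}(\Gamma)$ whose $\nu$-th power integral is exactly the right-hand side of the asserted inequality; the identification of that integral is then carried out by the change of variables formula. Two standard facts about a homeomorphism $\varphi$ of finite distortion of the class $W^1_{\nu,\loc}(\Omega;\Omega')$ on a Carnot group underlie the argument: $\varphi$ is $P$-differentiable almost everywhere with $|D_H\varphi|\in L_{\nu,\loc}(\Omega)$, and $\varphi$ possesses the Luzin $N$-property, so that the area formula
$$
\int\limits_{\Omega}(u\circ\varphi)(x)\,J(x,\varphi)\,dx=\int\limits_{\Omega'}u(y)\,dy
$$
holds for every nonnegative Borel function $u$. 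Since $\varphi$ has finite distortion, $D_H\varphi(x)=0$ for almost all $x$ in the branch set $Z=\{x\in\Omega:J(x,\varphi)=0\}$, so $K_O(x,\varphi)\,J(x,\varphi)=|D_H\varphi(x)|^{\nu}$ for a.e. $x\in\Omega$, and by the $N$-property $|\varphi(Z)|=0$; hence $Z$ and $\varphi(Z)$ are negligible on the corresponding sides of the inequality. Applying the area formula to $g=\chi_{\varphi^{-1}(K')}\,|D_H\varphi|^{\nu}/J(\cdot,\varphi)$ (extended by $0$ over $Z$) already yields $\int_{K'}K_O(\varphi^{-1}(y),\varphi)\,dy=\int_{\varphi^{-1}(K')}|D_H\varphi(x)|^{\nu}\,dx<\infty$ for every compact $K'\subset\Omega'$, which is the asserted local integrability of $Q$.

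Next, let $S$ be the null set off which $\varphi$ is $P$-differentiable, fix a Borel representative of $|D_H\varphi|$, and put
$$
\rho^{\ast}(x)=\rho(\varphi(x))\,|D_H\varphi(x)|\quad\text{for }x\in\Omega\setminus(Z\cup S),
$$
$\rho^{\ast}=0$ on $Z$ and outside $\Omega$, and $\rho^{\ast}=+\infty$ on $S$; altering $\rho^{\ast}$ on the null set $S$ changes no integral. I claim $\rho^{\ast}$ is admissible for $\varphi^{-1}(\Gamma)$ up to a subfamily of zero $\nu$-modulus. Indeed, since $\varphi\in W^1_{\nu,\loc}(\Omega;\Omega')$, for each $f\in\Lip(\mathbb G)$ the composition $f\circ\varphi$ is absolutely continuous along $\nu$-almost every curve in $\Omega$, whence $\varphi$ itself is; moreover $\nu$-almost every curve meets $S$ in a set of zero length. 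Thus along $\nu$-almost every locally rectifiable curve $\beta$ in $\Omega$, parametrized by Carnot--Carath\'eodory arc length, one has $\dot{(\varphi\circ\beta)}(t)=D_H\varphi(\beta(t))\dot\beta(t)$ and $|\dot{(\varphi\circ\beta)}(t)|\leqslant|D_H\varphi(\beta(t))|$ for a.e. $t$, so for $\beta=\varphi^{-1}\circ\gamma$ with $\gamma\in\Gamma$
$$
\int\limits_{\beta}\rho^{\ast}\,ds=\int\limits_0^{l(\beta)}\rho(\varphi(\beta(t)))\,|D_H\varphi(\beta(t))|\,dt\geqslant\int\limits_0^{l(\beta)}\rho(\gamma(t))\,|\dot{(\varphi\circ\beta)}(t)|\,dt=\int\limits_{\gamma}\rho\,ds\geqslant 1.
$$
The curves $\beta\in\varphi^{-1}(\Gamma)$ for which this fails form a family of zero $\nu$-modulus, so by monotonicity and countable subadditivity of the modulus $M(\varphi^{-1}(\Gamma))\leqslant\int_{\Omega}(\rho^{\ast})^{\nu}\,dx$.

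It remains to evaluate the last integral. Using $K_O(x,\varphi)\,J(x,\varphi)=|D_H\varphi(x)|^{\nu}$ a.e. and the area formula with $u(y)=K_O(\varphi^{-1}(y),\varphi)\,\rho^{\nu}(y)$,
$$
\int\limits_{\Omega}(\rho^{\ast})^{\nu}\,dx=\int\limits_{\Omega\setminus Z}\rho^{\nu}(\varphi(x))\,K_O(x,\varphi)\,J(x,\varphi)\,dx=\int\limits_{\Omega'}K_O(\varphi^{-1}(y),\varphi)\,\rho^{\nu}(y)\,dy,
$$
so that $M(\varphi^{-1}(\Gamma))\leqslant\int_{\Omega'}K_O(\varphi^{-1}(y),\varphi)\,\rho^{\nu}(y)\,dy$; since $\Gamma$ and $\rho$ were arbitrary, $\varphi^{-1}$ is a $Q$-homeomorphism with $Q(y)=K_O(\varphi^{-1}(y),\varphi)\in L_{1,\loc}(\Omega')$.

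The main obstacle is the admissibility step: one must justify rigorously, in the borderline class $W^1_{\nu,\loc}$ on a Carnot group, that $\varphi$ is absolutely continuous along $\nu$-almost every rectifiable curve in $\Omega$, together with the pointwise bound $|\dot{(\varphi\circ\beta)}|\le|D_H\varphi(\beta)|$ needed for the line-integral change of parameter $s\mapsto t$ above; the Luzin $N$-property of $W^1_{\nu,\loc}$-homeomorphisms on Carnot groups, on which the \emph{equality} in the change of variables formula rests, is the other delicate ingredient. Both are available from the Sobolev mapping theory on Carnot groups and would be quoted from the literature; the remaining estimates are routine.
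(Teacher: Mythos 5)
Your proposal is correct and follows essentially the same route as the paper's own proof: the same test function $\rho(\varphi(x))\,|D_H\varphi(x)|$, a Fuglede-type reduction to curves along which $\varphi$ is absolutely continuous, and the change of variables formula to identify the integral with $\int_{\Omega'}K_O(\varphi^{-1}(y),\varphi)\rho^{\nu}(y)\,dy$ (the paper quotes the generalized Fuglede theorem from the Carnot-group literature for exactly the admissibility step you flag as the main obstacle). Your explicit handling of the non-differentiability set by setting $\rho^{\ast}=+\infty$ there is a slightly more careful rendering of the same argument, not a different one.
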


\begin{proof}
Let $F$ be a compact subdomain of $\Omega$, $F'=\varphi(F)$. Denote 
$$
Z=\{x\in \Omega: J(x,\varphi)=0\}.
$$
Because $\varphi$ is a mapping of finite distortion then $|D_H\varphi|=0$ a.~e. on $Z$ and $K_O(x,\varphi)$ is well defined for almost all $x\in\Omega$. Since $\varphi\in W^1_{\nu,\loc}(\Omega)$ then $\varphi$ possesses the Luzin $N$-property and the outer distortion 
$K_O(\varphi^{-1}(y),\varphi)$ be well defined for almost all $y\in\Omega'$.
Then
\begin{multline*}
\int\limits_{F'} K_O\left(\varphi^{-1}(y),\varphi\right)~dy=\int\limits_{F'\setminus\varphi(Z)} \frac{|D_H\varphi(\varphi^{-1}(y))|^{\nu}}{|J(\varphi^{-1}(y),\varphi)|}~dy+\int\limits_{F'\cap \varphi(Z)} ~dy\\
=\int\limits_{F\setminus Z} \frac{|D_H\varphi(x)|^{\nu}}{|J(x,\varphi)|}|J(x,\varphi)|~dx+\int\limits_{F\cap Z} |J(x,\varphi)|~dx\\
=\int\limits_{F\setminus Z} |D_H\varphi(x)|^{\nu}~dx+\int\limits_{F\cap Z} |J(x,\varphi)|~dx<\infty.
\end{multline*}

Since $\varphi:\Omega\to\Omega'$ belongs to $W^1_{\nu,\loc}(\Omega;\Omega')$ then $\varphi$ be a (weakly) contact mapping differentiable almost everywhere in $\Omega$ and absolutely continuous on almost all horizontal curves. By generalized Fuglede's theorem (see, \cite{M3, Sh00}), we have that if $\tilde{\Gamma}$ is the family of all paths $\gamma\in\varphi^{-1}(\Gamma)$ such that $\varphi$ is absolutely continuous on all closed subpaths of $\gamma$, then $M(\varphi^{-1}(\Gamma))=M(\tilde{\Gamma})$.

Hence, for given a function $\rho\in adm\Gamma$ we define
\begin{equation}
\begin{cases}
\widetilde{\rho}(x)=\rho(\varphi(x))|D_H\varphi(x)|\,\,&\text{if}\,\, x\in\Omega,\\
0 &\text{otherwise}.
\end{cases}
\end{equation}

Then, for almost all $\tilde{\gamma}\in\tilde{\Gamma}$
$$
\int\limits_{\tilde{\gamma}}\tilde{\rho}~ds\geqslant\int\limits_{\varphi\circ{\widetilde{\gamma}}}{\rho}~ds\geqslant
1
$$
and consequently $\tilde{\rho}\in adm\tilde{\Gamma}$.

Therefore, using the change of variable formula \cite{VU96} we obtain:
\begin{multline}
M(\varphi^{-1}(\Gamma))=M(\tilde{\Gamma})\leq\int\limits_{\Omega}{\tilde{\rho}}^{\nu}(x)~dx=
\int\limits_{\Omega}\rho^{\nu}(\varphi(x))|D_H\varphi(x)|^{\nu}~dx=\\
\int\limits_{\Omega\setminus Z}\rho^{\nu}(\varphi(x))|D_H\varphi(x)|^{\nu}~dx
=\int\limits_{\Omega\setminus Z}\rho^{\nu}(\varphi(x))\frac{|D_H\varphi(x)|^{\nu}}{|J(x,\varphi)|}|J(x,\varphi)|~dx\\
=\int\limits_{\Omega'\setminus \varphi(Z)}\rho^{\nu}(y)\frac{|D_H\varphi(\varphi^{-1}(y))|}{|J(\varphi^{-1}(y),\varphi)|}~dy
=\int\limits_{\Omega'}K_O\left(\varphi^{-1}(y),\varphi\right)\rho^{\nu}(y)~dy.
\end{multline}
Hence $\varphi^{-1}$ is a $Q$-homeomorphism with $Q(y)=K_O\left(\varphi^{-1}(y),\varphi\right)\in L_{1,\loc}(\Omega')$.

\end{proof}

\subsection{The weak inverse mapping theorem on Carnot groups}

In this section we prove that mappings inverse to Sobolev homeomorphisms of finite distortion of the class $W^1_{\nu,\loc}(\Omega;\Omega')$ are Sobolev mappings.

\begin{thm}
\label{inverse}
Let $\varphi: \Omega\to\Omega'$ be a Sobolev homeomorphism of finite distortion of the class $W^1_{\nu,\loc}(\Omega;\Omega')$. Then $\varphi^{-1}\in W^1_{1,\loc}(\Omega;\Omega')$. 
\end{thm}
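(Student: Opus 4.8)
The plan is to combine the two main results already established in the paper. By Theorem~\ref{ThmQN}, since $\varphi\colon\Omega\to\Omega'$ is a homeomorphism of finite distortion of the Sobolev class $W^1_{\nu,\loc}(\Omega;\Omega')$, the inverse mapping $\varphi^{-1}\colon\Omega'\to\Omega$ satisfies the moduli inequality
$$
M\left(\varphi^{-1}(\Gamma)\right)\leqslant\int\limits_{\Omega'}Q(y)\,\rho^{\nu}(y)~dy
$$
for every family $\Gamma$ of rectifiable paths in $\Omega'$ and every $\rho\in adm(\Gamma)$, where $Q(y)=K_O\left(\varphi^{-1}(y),\varphi\right)$. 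That is, $\varphi^{-1}$ is a $Q$-homeomorphism of the domains $\Omega'$ and $\Omega$.

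The key observation is that this $Q$ is locally integrable on $\Omega'$. This is exactly the content of the displayed estimate at the beginning of the proof of Theorem~\ref{ThmQN}: for every compact subdomain $F\subset\Omega$ with $F'=\varphi(F)$, one has
$$
\int\limits_{F'}K_O\left(\varphi^{-1}(y),\varphi\right)~dy=\int\limits_{F\setminus Z}|D_H\varphi(x)|^{\nu}~dx+\int\limits_{F\cap Z}|J(x,\varphi)|~dx<\infty,
$$
the finiteness being guaranteed by $\varphi\in W^1_{\nu,\loc}(\Omega;\Omega')$ together with the Luzin $N$-property (which holds for $W^1_{\nu,\loc}$ homeomorphisms, ensuring $|\varphi(Z)|$ contributes the harmless second term and that the change of variables is legitimate). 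Hence $Q\in L_{1,\loc}(\Omega')$.

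Now I would invoke Theorem~\ref{ThmACL} directly: a $Q$-homeomorphism between domains of $\mathbb{G}$ with $Q\in L_{1,\loc}$ belongs to $W^1_{1,\loc}$. Applying this with the roles of the domains interchanged — the source domain being $\Omega'$ and the target being $\Omega$ — yields $\varphi^{-1}\in W^1_{1,\loc}(\Omega';\Omega)$, which is the claim. The proof is therefore essentially a two-line chaining of the preceding theorems, and no genuinely new obstacle arises. The only point requiring a word of care is purely bookkeeping: one must verify that the hypotheses of Theorem~\ref{ThmACL} are met for $\varphi^{-1}$, i.e.\ that it is indeed a homeomorphism of domains (immediate, as the inverse of a homeomorphism of domains) and that the function $Q$ supplied by Theorem~\ref{ThmQN} is the locally integrable weight demanded by Theorem~\ref{ThmACL} (established above). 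With these in place the conclusion is immediate.
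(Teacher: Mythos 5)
Your proposal is correct and follows exactly the paper's own argument: apply Theorem~\ref{ThmQN} to conclude that $\varphi^{-1}$ is a $Q$-homeomorphism with $Q=K_O(\varphi^{-1}(\cdot),\varphi)\in L_{1,\loc}(\Omega')$, then invoke Theorem~\ref{ThmACL} to get $\varphi^{-1}\in W^1_{1,\loc}(\Omega';\Omega)$. The extra detail you supply on the local integrability of $Q$ is already contained in the proof of Theorem~\ref{ThmQN}, so nothing new is needed.
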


\begin{proof}
By Theorem~\ref{ThmQN} we obtain that the inverse mapping $\varphi^{-1}:\Omega'\to\Omega$ be a $Q$-homeomorphism with $Q\in L_{1,\loc}(\Omega')$. Hence using Theorem~\ref{ThmACL} we conclude that the inverse mapping $\varphi^{-1}\in W^1_{1,\loc}(\Omega';\Omega)$.
\end{proof}

\vskip 0.3cm

Department of Mathematical Analysis, Zhytomyr Ivan Franko State
University, 40 Bol'shaya Berdichevskaya Str., Zhytomyr, 10008,
Ukraine

\emph{E-mail address:} \email{esevostyanov2009@gmail.com}

\vskip 0.3cm

Department of Mathematics, Ben-Gurion University of the Negev, P.O.Box 653, Beer Sheva, 8410501, Israel

\emph{E-mail address:} \email{ukhlov@math.bgu.ac.il}

\end{document}